\newcommand{\Rmnum}[1]{\expandafter\@slowromancap\romannumeral #1@}
\theoremstyle{plain}
\newtheorem{theorem}{Theorem}[section]
\newtheorem{lemma}[theorem]{Lemma}
\newtheorem{proposition}[theorem]{Proposition}
\newtheorem{corollary}[theorem]{Corollary}
\theoremstyle{definition}
\newtheorem{definition}[theorem]{Definition}
\newtheorem{remark}[theorem]{Remark}
\newtheorem{problem}[theorem]{Problem}
\title[mw-convergence]{multiplicative weak convergence in Banach $f$-algebras}
\date{\today}
\keywords{Banach lattices, $f$-algebra, multiplicative weak convergence, multiplicative order convergence, multiplicative weak convergence, multiplicative weak star convergence.}
\subjclass[2010]{xxx,xxx}
\author[Z. Wang]{Zhangjun Wang}
\address{School of Mathematics, Southwest Jiaotong University,
Chengdu, Sichuan,
China, 610000.}
\email{zhangjunwang@my.swjtu.edu.cn}
\author[Z. Chen]{Zili Chen}
\address{School of Mathematics, Southwest Jiaotong University, Chengdu, Sichuan,
China, 610000.}
\email{zlchen@swjtu.edu.cn}
\author[J. Chen]{Jinxi Chen}
\address{School of Mathematics, Southwest Jiaotong University, Chengdu, Sichuan,
	China, 610000.}
\email{jinxichen@swjtu.edu.cn}
\begin{document}

\begin{abstract}
A net $(x_\alpha)$ in an $f$-algebra $E$ is called multiplicative order convergent to $x\in E $ if
$|x_\alpha -x|u\xrightarrow{o}0$ for all $u\in E_+$.
A net $(x_\alpha)$ in a Banach $f$-algebra $E$ is called multiplicative norm convergent to $x\in E $ if
$|x_\alpha -x|u\rightarrow0$ for all $u\in E_+$.
In this paper, we study this convergence in Banach $f$-algebra and its dual, 
A net $(x_\alpha)$ in a Banach $f$-algebra $E$ is called multiplicative weak convergent to $x\in E $ if
$|x_\alpha -x|u\xrightarrow{w}0$ for all $u\in E_+$.
\end{abstract}

\maketitle

\section{Introduction}
A net $(x_\alpha)$ in a Riesz space $E$ is called order convergent to $x\in E $ if
$|x_\alpha -x|\leq y_\beta$ for $y_\beta\downarrow0\in E_+$. 
A net $(x_\alpha)$ in a Riesz space $E$ is called unbounded order convergent to $x\in E $ if
$|x_\alpha -x|\wedge u\xrightarrow{o}0$ for all $u\in E_+$[4,5].
A net $(x_\alpha)$ in a Banach lattice $E$ is called unbounded norm convergent to $x\in E $ if
$|x_\alpha -x|\wedge u\xrightarrow{\Vert  \Vert}0$ for all $u\in E_+$[6,7].
A net $(x_\alpha)$ in a Banach lattice $E$ is called unbounded absolute weak convergent to $x\in E $ if
$|x_\alpha -x|\wedge u\xrightarrow{w}0$ for all $u\in E_+$[8,9].

In [1],a vector lattice $E$ under an associative multiplication is said to be a $Riesz$ $algebra$
whenever the multiplication makes $E$ an algebra (with the usual properties).A Riesz algebra $E$
is called commutative if $xy = yx$ for all $x,y\in E$.
A Riesz algebra $E$ is called f-algebra if
$E$ has additionally property that $x\wedge y=0$ implies $(xz)\wedge y=(zx)\wedge y=0$ and $xy\in E_+$ for every $x,y\in E_+$.A vector lattice $E$ is called Archimedean whenever $\frac{1}{n}x\downarrow 0$ holds in $E$ for each $x\in E_+$.Every Archimedean $f$-algebra is commutative.In this article, unless otherwise, all vector lattices are assumed to be real and
Archimedean, and so $f$-algebras are commutative.
An $f$-algebra $E$ which is at the same time a
Banach lattice is called a Banach $f$-algebra whenever $\Vert xy\Vert\leq\Vert x\Vert\vert y\Vert$ holds for all
$x,y\in E$.

A net $(x_\alpha)$ in an $f$-algebra $E$ is called multiplicative order convergent to $x\in E $ if
$|x_\alpha -x|u\xrightarrow{o}0$ for all $u\in E_+$ in[11].
A net $(x_\alpha)$ in a Banach $f$-algebra $E$ is called multiplicative norm convergent to $x\in E $ if
$|x_\alpha -x|u\rightarrow0$ for all $u\in E_+$ in[12].

\begin{definition}\label{}
A net $(x_\alpha)$ in a Banach $f$-algebra $E$ is called multiplicative weak convergent to $x\in E $ if
$|x_\alpha -x|u\xrightarrow{w}0$ for all $u\in E_+$ $(x_\alpha\xrightarrow{mw}x)$.

A net $(x^{'}_\alpha)$ in a Banach $f$-algebra $E^{'}$ is called multiplicative weak star convergent to $x^{'}\in E $ if
$|x^{'}_\alpha -x^{'}|u\xrightarrow{w^*}0$ for all $u^{'}\in E^{'}_+$ $(x^{'}_\alpha\xrightarrow{mw^*}x^{'})$.	
\end{definition}

\begin{remark}\label{}
For a net $(x\alpha)$ in a Banach $f$-algebra $E$,$(x_\alpha\xrightarrow{mw}x)$ implies $(x_\alpha y\xrightarrow{mw}xy)$for all
$y\in E$ because of $|x_\alpha y-xy|u=|x_\alpha -x||y|u$ for all $u\in E_+$. The converse
holds true in Banach $f$-algebras with the multiplication unit. Indeed, assume$(x_\alpha y\xrightarrow{mw}xy)$for each $y\in E$. Fix $u\in E_+$, so, $|x_\alpha -x|u=|x_\alpha e-xe|u\xrightarrow{w}0$. Similarily, the $mw^*$-convergence has those properties.
\end{remark}

\begin{remark}\label{}
In Banach $f$-algebras, the weak convergence does not implies the $mw$-convergence, unless it has Schur property. The converse holds true in Banach $f$-algebras with the multiplication unit. Assume$(x_\alpha y\xrightarrow{mw}xy)$for each $y\in E$, then $|x_\alpha -x|=|x_\alpha -x|e\xrightarrow{w}0$, so
$x_\alpha\xrightarrow{w}x$.
\end{remark}

\begin{remark}\label{}
In order continuous Banach $f$-algebras, order convergence and $mo$-convergence imply the $mn$-convergence.
\end{remark}
\section{Results}\label{}

\begin{lemma}\label{}
Let $(x_\alpha)$and $(y_\alpha)$ be two nets in a Banach $f$-algebra $E$. Then the following holds true:

$(1)$ $x_\alpha\xrightarrow{mw}x$ iff $x_\alpha -x\xrightarrow{mw}0$ iff $|x_\alpha -x|\xrightarrow{mw}0$;

$(2)$ if $x_\alpha\xrightarrow{mw}x$ then $y_\beta\xrightarrow{mw}x$ for each subnet $(y_\beta)$ of $(x_\alpha)$;

$(3)$ suppose $x_\alpha\xrightarrow{mw}x$ and $y_\beta\xrightarrow{mw}y$, then $ax_\alpha +by_\beta\xrightarrow{mw}ax+by$ for any $a,b\in R$;

$(4)$ if $x_\alpha\xrightarrow{mw}x$ and $x_\alpha\xrightarrow{mw}y$, then $x=y$;

$(5)$ if $x_\alpha\xrightarrow{mw}x$ then $|x_\alpha|\xrightarrow{mw}|x|$.

The $mw^*$-convergence also has those properties.
\end{lemma}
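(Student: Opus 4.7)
The plan is to dispatch the five parts in increasing order of difficulty, using basic properties of weak convergence together with the lattice and algebra structure, and then to note that every argument transports verbatim to the $w^*$-setting because all the key inequalities are tested only against positive elements on one side of the duality.

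First, I would handle the purely structural parts. Part~(1) is immediate from the identity $\abs{(x_\alpha-x)-0}\,u=\abs{x_\alpha-x}\,u=\bigabs{\abs{x_\alpha-x}-0}\,u$ for every $u\in E_+$, since each of the three statements asserts that this common quantity weakly converges to $0$. Part~(2) is the observation that weak convergence passes to subnets: for each fixed $u\in E_+$ the net $(\abs{x_\alpha-x}u)$ weakly converges to $0$, so any subnet, in particular the one induced by $(y_\beta)$, converges to the same limit.

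Parts~(3) and~(5) both rest on the following domination principle in a Banach lattice $E$: if $0\leq z_\gamma\leq w_\gamma$ and $w_\gamma\xrightarrow{w}0$, then $z_\gamma\xrightarrow{w}0$. This holds because every $\phi\in E'$ decomposes as $\phi^+-\phi^-$ with $\phi^\pm\in E'_+$, and monotonicity gives $0\leq\phi^\pm(z_\gamma)\leq\phi^\pm(w_\gamma)\to 0$. For part~(5), multiplying the Birkhoff inequality $\bigabs{\abs{x_\alpha}-\abs{x}}\leq\abs{x_\alpha-x}$ on the right by $u\in E_+$ (which is order preserving because $E$ is an $f$-algebra) puts us in the above framework. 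For part~(3), index the combined net by the product directed set $A\times B$ and apply the triangle inequality
\[
 \bigabs{(ax_\alpha+by_\beta)-(ax+by)}\,u\leq\abs{a}\,\abs{x_\alpha-x}u+\abs{b}\,\abs{y_\beta-y}u;
\]
both summands on the right weakly tend to $0$ by hypothesis, hence so does the left-hand side by the domination principle.

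The main obstacle is part~(4). From $\abs{x-y}u\leq\abs{x-x_\alpha}u+\abs{x_\alpha-y}u$ and testing against $\phi\in E'_+$, the constant left-hand side satisfies $\phi(\abs{x-y}u)=0$ for every positive $\phi$. Since positive functionals separate the positive cone of a Banach lattice, $\abs{x-y}u=0$ for every $u\in E_+$. The key step is to upgrade this to $\abs{x-y}=0$: specialise $u:=\abs{x-y}$ to obtain $\abs{x-y}^2=0$, and then invoke the fact that every Archimedean $f$-algebra is semiprime, which forces $\abs{x-y}=0$ and hence $x=y$. This semiprime ingredient is the only genuinely algebraic fact used in the whole lemma. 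Finally, for the $mw^*$-analogues the domination principle still works because $w^*$-convergence in $E'$ means pointwise testing against $x\in E=E_+-E_+$; positivity of $x$ plays the role that positivity of $\phi$ did above. The hypothesis that $E'$ is itself a Banach $f$-algebra is exactly what is needed to run the semiprime argument of part~(4) there, so each of the five steps admits a verbatim $w^*$-version.
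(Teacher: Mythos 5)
Your handling of parts (1), (2), (3) and (5) is correct and is essentially the argument the paper compresses into its two displayed inequalities; the domination principle you state (if $0\leq z_\gamma\leq w_\gamma$ and $w_\gamma\xrightarrow{w}0$ then $z_\gamma\xrightarrow{w}0$, via $\phi=\phi^+-\phi^-$) is sound, and its $w^*$-analogue via $x=x^+-x^-$ is likewise fine.

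The genuine gap is in part (4), in the step where you invoke the claim that \emph{every Archimedean $f$-algebra is semiprime}. That is false. Equip any Banach lattice with the zero multiplication: all the $f$-algebra axioms and the norm inequality $\norm{xy}\leq\norm{x}\,\norm{y}$ hold trivially, the algebra is Archimedean, yet every nonzero element is nilpotent. In that algebra $\abs{x_\alpha-x}u=0$ identically, so every net $mw$-converges to every point and uniqueness of limits genuinely fails; no proof of (4) can exist at this level of generality. Your reduction is otherwise exactly right — positive functionals separate the positive cone, so $\abs{x-y}u=0$ for all $u\in E_+$, and taking $u=\abs{x-y}$ gives $\abs{x-y}^2=0$ — but to conclude $\abs{x-y}=0$ you must \emph{assume} the algebra is semiprime (or has a multiplicative unit $e$, in which case $u=e$ finishes immediately); semiprimeness is an extra hypothesis, not a consequence of the Archimedean property. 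It is worth noting that the paper's own one-line proof silently skips this issue entirely, so you have actually located a defect in the statement itself; the honest fix is to add "semiprime" (or "unital") to the hypotheses of part (4), and correspondingly for the $mw^*$-version, where one must additionally assume that $E'$ carries an $f$-algebra structure at all, which the paper never justifies.
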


\begin{proof}
We only need $|x-y|\leq|x-x_\alpha|+|x_\alpha-y|$ and $\big||x_\alpha|-|x|\big|\leq|x_\alpha-x|$.
\end{proof}

\begin{lemma}\label{}
For a Banach $f$-algebra $E$ is order continuous, the $mo$-convergence implies $mw$-convergence.
\end{lemma}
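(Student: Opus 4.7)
The plan is to chain together two well-known implications: in an order continuous Banach lattice, order convergence implies norm convergence, and norm convergence always implies weak convergence. Combining these pointwise (for each fixed $u\in E_+$) turns the hypothesis $|x_\alpha-x|u\xrightarrow{o}0$ directly into the desired conclusion $|x_\alpha-x|u\xrightarrow{w}0$.

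More concretely, I would fix an arbitrary $u\in E_+$ and set $y_\alpha:=|x_\alpha-x|u$. By definition of $mo$-convergence, $y_\alpha\xrightarrow{o}0$, so there exists a net $(z_\beta)$ with $z_\beta\downarrow 0$ in $E_+$ such that for every $\beta$, $|y_\alpha|\leq z_\beta$ eventually in $\alpha$. The order continuity of the norm on $E$ gives $\|z_\beta\|\downarrow 0$, and from $\|y_\alpha\|\leq \|z_\beta\|$ (eventually) I conclude $\|y_\alpha\|\to 0$. Since norm convergence implies weak convergence, $y_\alpha\xrightarrow{w}0$, i.e.\ $|x_\alpha-x|u\xrightarrow{w}0$. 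As $u\in E_+$ was arbitrary, $x_\alpha\xrightarrow{mw}x$.

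There is no serious obstacle here; the statement is essentially the observation that $mo\Rightarrow mn\Rightarrow mw$ (the first implication being Remark 1.3 in disguise, and the second being immediate from norm-implies-weak). The only point that deserves care is that one should invoke order continuity in the form that applies to arbitrary order-null nets, not merely to monotone ones, which is precisely the domination-by-a-decreasing-net argument sketched above. Once this is in place the proof is a one-line chain of implications.
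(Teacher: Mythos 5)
Your argument is correct and follows the same route the paper intends: the paper's proof is a one-line citation of [12, Remark 1.2] (the fact that in an order continuous Banach $f$-algebra, $mo$-convergence implies $mn$-convergence), after which $mn\Rightarrow mw$ is immediate since norm convergence implies weak convergence. You simply write out in full the domination-by-a-decreasing-net argument that the citation hides, so the content is the same.
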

\begin{proof}
According to [12,remark 1.2], we have the conclusion.
\end{proof}

\begin{lemma}\label{}
Every disjoint decreasing sequence in a Banach $f$-algebra $mw$-converges to zero.
\end{lemma}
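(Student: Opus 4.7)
The plan is to exploit two ingredients: the defining axiom of an $f$-algebra, namely that multiplication by a positive element preserves disjointness, together with the trivial implication that $mn$-convergence forces $mw$-convergence. Since the analogue of this lemma for multiplicative norm convergence is expected to be available from [12], one route is simply to quote it and observe that norm-null implies weakly null. Below I describe how I would argue directly, without invoking [12].

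Fix an arbitrary $u\in E_+$; the goal is to prove $|x_n|u\xrightarrow{w}0$. Starting from $|x_n|\wedge|x_m|=0$ for $n\ne m$, I would apply the $f$-algebra axiom $a\wedge b=0\Rightarrow(az)\wedge b=0$ (valid for $z\in E_+$) twice in succession, obtaining first $(|x_n|u)\wedge|x_m|=0$ and then $(|x_n|u)\wedge(|x_m|u)=0$. Hence $(|x_n|u)_n$ is itself a disjoint sequence in $E_+$.

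Next I would use the decreasing hypothesis. Interpreting the sequence as living in $E_+$ (so that $|x_n|=x_n$) with $x_n\downarrow$, the sequence $(x_n u)_n$ is both disjoint and decreasing in $E_+$. In any Archimedean Riesz space such a sequence must vanish from the second term onwards: from $x_1u\ge x_2u\ge 0$ one has $(x_1u)\wedge(x_2u)=x_2u$, but disjointness forces this meet to be $0$, so $x_2u=0$, and an immediate induction gives $x_nu=0$ for all $n\ge 2$. In particular $|x_n|u\to 0$ in norm, and therefore weakly, so $x_n\xrightarrow{mw}0$ by definition.

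The one genuine obstacle is interpretive rather than technical: one must confirm what the authors mean by a \emph{disjoint decreasing sequence}, in particular whether the $x_n$ are assumed to lie in the positive cone. If instead the hypothesis only says $x_n\ge x_{n+1}$ without positivity, then $(|x_n|)$ need not be decreasing, and the short argument above has to be replaced by showing that $(|x_n|u)$ is an order bounded disjoint sequence (bounded, for instance, by $(|x_1|+|x_2|)u$ after a truncation) and then appealing to the order-continuity-type arguments implicit in the paper's earlier remarks. Either way the proof remains short, and the essential step is the transfer of disjointness under multiplication by $u$ afforded by the $f$-algebra property.
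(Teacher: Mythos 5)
Your proof is correct, and it takes a genuinely different route from the paper's. Both arguments begin the same way, by pushing disjointness through the multiplication so that $(|x_n|u)$ is a disjoint sequence; but the paper then quotes [8, Lemma 2] to conclude that this disjoint sequence is $uaw$-null, i.e.\ $|x_n|u\wedge w\xrightarrow{w}0$ for every $w\in E_+$, and finally uses the decreasing hypothesis to choose the dominating element $w=|x_{n_0}|u$, so that $|x_n|u=|x_n|u\wedge w\xrightarrow{w}0$ for $n\geq n_0$. You instead observe that disjointness combined with monotonicity forces the sequence to vanish from the second term on, which is sharper: it exposes the lemma as stated as essentially vacuous. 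The interpretive worry in your last paragraph can be dispatched without assuming positivity, so your fallback argument is never needed: since $x_n\downarrow$, the negative parts satisfy $x_n^-\uparrow$, while $x_n^-\wedge x_{n+1}^-\leq|x_n|\wedge|x_{n+1}|=0$, hence $x_n^-=x_n^-\wedge x_{n+1}^-=0$ for every $n$; thus all terms are positive and $x_{n+1}=x_{n+1}\wedge x_n=0$. What the paper's route buys is robustness rather than economy: the same scheme (disjointness of $(|x_n|u)$ plus domination by a single element) proves the more substantive statement that every \emph{order bounded} disjoint sequence is $mw$-null, by taking $w=vu$ for an order bound $v$, whereas your argument is tied to the degenerate interaction of ``disjoint'' with ``decreasing''.
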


\begin{proof}
Suppose $(x_n)$ is a disjoint and decreasing sequence in an Banach $f$-algebra $E$. So, $|x_n|u$
is also a disjoint sequence in $E$ for all $u\in E_+$ by[2,definition 2.53 and 3,definition 3.1.8].Fix $u\in E_+$, by[8.lemma 2], we have $|x_n|u\xrightarrow {uaw}0$ in $E$. So, $|xn|u\wedge w\xrightarrow{w}0$ in $E$ for all $w\in E_+$. Thus, in particular for fixed $n_0$, taking $w$ as $|x_{n_{0}}|u$. Then, for all $n\geq n_0$, we get
$$|x_n|u=|x_n|u\wedge|x_{n_{0}}|u=|x_n|u\wedge w\xrightarrow{w}0$$ Since $|x_n|u\leq|x_{n_{0}}|u$, therefore, $x_n\xrightarrow{mw}0$ in $E$.
\end{proof}

\begin{lemma}\label{}
Let $E$ be an Banach $f$-algebra, $B$ be a projection band of $E$ and $P_B$ be the corresponding band projection. Then $x_\alpha\xrightarrow{mw}x$ in $E$ implies $P_B(x_\alpha)\xrightarrow{mw}P_B(x)$ in both $E$ and $B$.
\end{lemma}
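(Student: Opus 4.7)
My plan is to reduce the statement to the bounded linearity of the band projection $P_B$, using two structural facts from $f$-algebra theory to push $P_B$ past the multiplication by $u$ and past the absolute value.

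First I would record the identity
\[
P_B(xy)=P_B(x)\,y\qquad (x,y\in E).
\]
This follows from the defining axiom of an $f$-algebra: whenever $x\wedge z=0$ one has $(xy)\wedge z=0$, which forces both $B$ and $B^d$ to be ring ideals of $E$; decomposing $x=P_B(x)+P_{B^d}(x)$ and multiplying by $y$ then gives the formula by uniqueness of the band decomposition. Combined with the lattice-homomorphism property $|P_B(z)|=P_B(|z|)$, this yields, for each $u\in E_+$,
\[
|P_B(x_\alpha)-P_B(x)|\,u=P_B(|x_\alpha-x|)\,u=P_B\bigl(|x_\alpha-x|\,u\bigr).
\]

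Next I would use that $P_B$ is a contractive positive projection on the Banach lattice $E$, hence bounded, hence weak-to-weak continuous. The hypothesis $|x_\alpha-x|\,u\xrightarrow{w}0$ therefore gives $P_B\bigl(|x_\alpha-x|\,u\bigr)\xrightarrow{w}0$ in $E$, which by the previous display is exactly $P_B(x_\alpha)\xrightarrow{mw}P_B(x)$ in $E$.

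For the convergence in $B$ itself, I would invoke that bands in a Banach lattice are norm-closed, so by the Hahn--Banach theorem each $\varphi\in B'$ extends to some $\tilde\varphi\in E'$. Since $u\in B_+\subseteq E_+$ and the net $|P_B(x_\alpha)-P_B(x)|\,u$ already lies in $B$ by the ring-ideal property, its weak convergence to $0$ in $E$ transfers verbatim to weak convergence to $0$ in $B$, which is the required $mw$-convergence in $B$. The only delicate step is the commutation identity $P_B(xy)=P_B(x)\,y$; once it is secured, both halves of the claim reduce to routine continuity and extension arguments.
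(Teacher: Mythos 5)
Your argument is correct and, unlike the paper, actually supplies a proof: the paper's entire justification is a one-line deferral to the analogous statement for $mo$-convergence in reference [11]. The natural adaptation of that cited argument is a domination one: since $0\leq P_B\leq I$ and band projections satisfy $|P_B(z)|=P_B(|z|)$, one gets
\[
0\leq |P_B(x_\alpha)-P_B(x)|\,u=P_B(|x_\alpha-x|)\,u\leq |x_\alpha-x|\,u\xrightarrow{w}0,
\]
and a positive net dominated by a weakly null net is weakly null because $E'=E'_+-E'_+$. Your route instead pushes $P_B$ all the way outside via the ring-ideal identity $P_B(xy)=P_B(x)y$ and then invokes weak-to-weak continuity of the contractive projection $P_B$; both mechanisms are sound, and your commutation identity is correctly justified (for $x\in B_+$ and $y\in E_+$ the $f$-algebra axiom gives $xy\perp B^d$, hence $xy\in B^{dd}=B$, and the general case follows by splitting into positive and negative parts). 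What your version buys is a cleaner treatment of the clause ``in $B$'': the domination argument alone does not immediately say the limit is attained in the weak topology of $B$, whereas your observation that the net $P_B(|x_\alpha-x|\,u)$ lies in $B$ and that every $\varphi\in B'$ extends to $E'$ settles it. The only cosmetic slip is in your statement of the $f$-algebra axiom, where the multiplier must be taken positive before decomposing; this does not affect the argument.
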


\begin{proof}
The proof is similarly to [11,proposition 2.7].
\end{proof}

\begin{lemma}\label{}
Let $(x_\alpha)$ be a net in a Banach $f$-algebra $E$ with order continuous norm. Then we have that 

$(1)$ $0\leq x_\alpha\xrightarrow{mw}x$ implies $x\in E_+$;

$(2)$ if $(x_\alpha)$ is monotone and $x_\alpha\xrightarrow{mw}x$ then $x_\alpha\xrightarrow{w}x$.
\end{lemma}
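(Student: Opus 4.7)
The plan is to establish (1) first and then deduce (2) from it via order continuity and the uniqueness of $mw$-limits (Lemma 2.1(4)).

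For (1), I would first upgrade $|x_\alpha - x|u \xrightarrow{w} 0$ to $x_\alpha u \xrightarrow{w} xu$ for every $u \in E_+$: for any positive functional $f \in E'_+$ the bound $|f(x_\alpha u - xu)| \leq f(|x_\alpha - x|u) \to 0$ works, and the general case follows by decomposing $f = f^+ - f^-$. Since $x_\alpha u \in E_+$ (product of positives in the $f$-algebra) and $E_+$ is weakly closed, the limit satisfies $xu \geq 0$ for every $u \in E_+$. Specializing to $u = x^-$ and invoking the Archimedean $f$-algebra identity $x^+ x^- = 0$ (which follows from $x^+ \wedge x^- = 0$) turns $x \cdot x^- \geq 0$ into $-(x^-)^2 \geq 0$, forcing $(x^-)^2 = 0$; the semiprime property then yields $x^- = 0$, so $x \in E_+$.

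For (2), I would reduce to the case $x_\alpha \uparrow$ (the decreasing case being symmetric via $-x_\alpha$). For any fixed $\alpha_0$, the tail $(x_\alpha - x_{\alpha_0})_{\alpha \geq \alpha_0}$ is positive and $mw$-convergent to $x - x_{\alpha_0}$ by Lemma 2.1, so part (1) gives $x \geq x_{\alpha_0}$. Hence $x$ is an upper bound for the net and $(x - x_\alpha)$ is a positive decreasing net. Since an order continuous Banach lattice is Dedekind complete, $x - x_\alpha \downarrow y$ for some $y \geq 0$, and by order continuity $\norm{x - x_\alpha - y} \to 0$. Thus $x_\alpha \to x - y$ in norm, which implies $x_\alpha \xrightarrow{mw} x - y$ via $\norm{\abs{x_\alpha - (x-y)}u} \leq \norm{x_\alpha - (x-y)}\norm{u}$. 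Comparing with the hypothesis $x_\alpha \xrightarrow{mw} x$ through Lemma 2.1(4) forces $y = 0$, so $x_\alpha \to x$ in norm, and hence weakly.

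I expect the main obstacle to be the final step of (1), namely the passage $(x^-)^2 = 0 \Rightarrow x^- = 0$: general Archimedean $f$-algebras may carry nonzero nilpotents, so this step tacitly relies on semiprimeness. The same assumption is in any case already required for uniqueness of $mw$-limits in Lemma 2.1(4), so it is reasonable to adopt it throughout. The remaining ingredients (weak closedness of $E_+$, weak continuity of multiplication by a positive element, and Dedekind completeness from order continuity) are standard in the Banach lattice and $f$-algebra setting.
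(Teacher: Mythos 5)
Your proof is correct, but it takes a genuinely different route from the paper's, most visibly in part (1). The paper's argument for (1) is purely order-theoretic: since $x_\alpha\geq 0$ we have $x_\alpha=x_\alpha^+$, and by Lemma 2.1(5) (via $|x_\alpha^+-x^+|\leq|x_\alpha-x|$) the net also $mw$-converges to $x^+$; uniqueness of $mw$-limits then gives $x=x^+\geq 0$. Your route through the weak closedness of $E_+$, the identity $x\cdot x^-=-(x^-)^2$, and semiprimeness is heavier, but it makes explicit an assumption (no nonzero nilpotents) that the paper's Lemma 2.1(4) also needs tacitly --- without it $mw$-limits are not unique and neither proof survives --- so your closing remark on that point is well taken; the paper's version is shorter and stays inside the lemmas already established, whereas yours exposes the algebraic mechanism. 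For part (2) both proofs begin identically (apply (1) to the tails $x_\beta-x_\alpha$ to see that $x$ is an upper bound of the increasing net), but the paper then simply asserts $x_\alpha\uparrow x$ and invokes order continuity, without addressing why $x$ is the \emph{least} upper bound. Your detour through Dedekind completeness --- obtaining $x-x_\alpha\downarrow y$, upgrading to norm convergence $x_\alpha\to x-y$, and then using Lemma 2.1(4) to force $y=0$ --- fills exactly that gap at the cost of a slightly longer argument, and it delivers the stronger conclusion of norm convergence. In short, your write-up is, if anything, more complete than the paper's own proof.
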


\begin{proof}
$(1)$: Assume $(x_\alpha)$ consists of non-zero elements and $mw$-converges to $x\in E$. Then, by Lemma 2.1(5), we have $x_\alpha=x^+_\alpha\xrightarrow{mw}x^+\geq0$. Therefore, we have $x\in E_+$.

$(2)$: For a fixed $\alpha$ ,we have $x_\beta-x_\alpha\in E_+$ for $\beta\geq\alpha$. By (1), we have $x_\beta-x_\alpha\xrightarrow{mw}x-x_\alpha\in E_+$, so $x_\alpha\uparrow x$, since $E$ is order continuous, therefore, $x_\alpha\xrightarrow{w}x$.
\end{proof}

The lattice operations in Banach lattice $f$-algebras are $mw$-continuous in the following sense.

\begin{proposition}\label{}
Let $(x_\alpha)_{\alpha\in A}$ and $(y_\beta)_{\beta\in B}$ be two nets in a Banach $f$-algebra $E$. If $x_\alpha\xrightarrow{mw}x$ and $y_\beta\xrightarrow{mw}y$ then $(x_\alpha\vee y_\beta)_{(\alpha,\beta)\in A\times B}\xrightarrow{mw}x\vee y$.($\wedge$ and $||$ are similarily)
\end{proposition}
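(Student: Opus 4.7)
The plan is to reduce everything to the Birkhoff-type lattice inequality
\[
|x_\alpha \vee y_\beta - x \vee y| \leq |x_\alpha - x| + |y_\beta - y|,
\]
which holds in any Riesz space. Fix an arbitrary $u \in E_+$. Because $E$ is an $f$-algebra and $u$ is positive, multiplication by $u$ is a positive operator, so applying it to the above inequality gives
\[
0 \leq |x_\alpha \vee y_\beta - x \vee y|\,u \leq |x_\alpha - x|\,u + |y_\beta - y|\,u.
\]
By hypothesis and Lemma 2.1(3), the dominating net on the right---indexed over the product directed set $A\times B$---converges weakly to $0$ (after the routine $\varepsilon/2$ argument on each functional separately, to combine the two index sets).

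The next step is to transfer this weak convergence through the order domination to the left-hand side. Since weak convergence is not in general preserved by squeezing, I would pass first to positive functionals: for every $\phi \in (E')_+$,
\[
0 \leq \phi\bigl(|x_\alpha \vee y_\beta - x \vee y|\,u\bigr) \leq \phi\bigl(|x_\alpha - x|\,u\bigr) + \phi\bigl(|y_\beta - y|\,u\bigr) \longrightarrow 0.
\]
For a general $\phi \in E'$ one then decomposes $\phi = \phi^+ - \phi^-$ with $\phi^\pm \in (E')_+$, which is legitimate because $E'$ is a Banach lattice in its natural order. This gives $|x_\alpha \vee y_\beta - x \vee y|\,u \xrightarrow{w} 0$, and since $u \in E_+$ was arbitrary, $x_\alpha \vee y_\beta \xrightarrow{mw} x \vee y$.

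The only step that is not purely formal is this positive-functional squeeze; it is where the Banach-lattice structure of $E'$ enters essentially, since one cannot pass a weak limit through an order inequality without first reducing to $\phi \geq 0$. The analogous assertions for $\wedge$ and $|\cdot|$ follow by the same argument applied to $|x_\alpha \wedge y_\beta - x \wedge y| \leq |x_\alpha - x| + |y_\beta - y|$ and $\bigl||x_\alpha| - |x|\bigr| \leq |x_\alpha - x|$, respectively, the latter inequality having already been used in the proof of Lemma 2.1.
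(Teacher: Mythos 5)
Your proof is correct and follows essentially the same route as the paper: both rest on the Birkhoff-type inequality $|x_\alpha\vee y_\beta-x\vee y|\leq|x_\alpha-x|+|y_\beta-y|$ (the paper obtains it in two steps via the intermediate term $x_\alpha\vee y$) and then multiply by $u\in E_+$. The only difference is that you make explicit the squeeze through positive functionals and the decomposition $\phi=\phi^+-\phi^-$, a step the paper's proof leaves implicit when it writes the dominated net as weakly convergent to $0$.
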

\begin{proof}
Assume $x_\alpha\xrightarrow{mw}x$ and $y_\beta\xrightarrow{mw}y$. Since we have
$$|x_\alpha\vee y_\beta-x\vee y|u\leq |x_\alpha\vee y_\beta-x_\alpha\vee y|u+|x_\alpha\vee y-x\vee y|u$$
$$\leq|y_\beta-y|u+|x_\alpha-x|u\xrightarrow{w}0$$
for every $u\in E_+$.That is, $(x_\alpha\vee y_\beta)_{(\alpha,\beta)\in A\times B}\xrightarrow{mw}x\vee y$.
\end{proof}

The multiplication in $f$-algebra is $mw$-continuous in the following sense.
\begin{proposition}\label{}
Let $E$ be a Banach lattice $f$-algebra, and $(x_\alpha)_{\alpha\in A}$ and $(y_\beta)_{\beta\in A}$ be two nets in $E$. If $x_\alpha\xrightarrow{mw}x$ and $y_\beta\xrightarrow{mw}y$ for some $x,y\in E$ ,$(x_\alpha)$ or $(y_\beta)$ is monotone, then we have $x_\alpha y_\beta\xrightarrow{mw}xy$.
\end{proposition}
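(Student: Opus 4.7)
\textbf{The plan} is to split $x_\alpha y_\beta - xy$ as $(x_\alpha - x)y_\beta + x(y_\beta - y)$, handle the second piece directly from $y_\beta \xrightarrow{mw} y$, and use the monotonicity hypothesis to produce a uniform bound on $|y_\beta|$ so that $x_\alpha \xrightarrow{mw} x$ takes care of the first piece. By the symmetric decomposition $x_\alpha y_\beta - xy = x_\alpha(y_\beta - y) + (x_\alpha - x)y$, I may assume that $(y_\beta)$ is the monotone net; replacing $y_\beta$ with $-y_\beta$ if necessary, I may further assume that it is increasing.

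\textbf{First I would derive $y_\beta \leq y$ for every $\beta$}, which is where the monotonicity pays off. Fix $\beta_0$: by Lemma 2.1(3) the tail $y_\beta - y_{\beta_0}$ $mw$-converges to $y - y_{\beta_0}$, while Lemma 2.1(5) gives $|y_\beta - y_{\beta_0}| \xrightarrow{mw} |y - y_{\beta_0}|$. Since the tail is already nonnegative, it equals its own modulus, so uniqueness of $mw$-limits (Lemma 2.1(4)) forces $y - y_{\beta_0} = |y - y_{\beta_0}| \geq 0$. Combined with monotonicity this gives $y_{\beta_0} \leq y_\beta \leq y$ for $\beta \geq \beta_0$, and the Riesz-space estimate $|z| \leq |a| \vee |b|$ for $a \leq z \leq b$ produces the key bound $|y_\beta| \leq |y_{\beta_0}| + |y|$ for $\beta \geq \beta_0$. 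With $u \in E_+$ fixed and $v := (|y_{\beta_0}| + |y|)\,u$,
\[
|x_\alpha y_\beta - xy|\,u \leq |x_\alpha - x|\,|y_\beta|\,u + |x|\,|y_\beta - y|\,u \leq |x_\alpha - x|\,v + |x|\,|y_\beta - y|\,u
\]
for every $\beta \geq \beta_0$. The second term on the right tends weakly to $0$ by $y_\beta \xrightarrow{mw} y$ applied to the positive vector $|x|\,u$, and the first by $x_\alpha \xrightarrow{mw} x$ applied to $v$.

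\textbf{The main obstacle} is transferring these individual weak limits into weak convergence of $|x_\alpha y_\beta - xy|\,u$ itself along the product directed set. I would test against one functional at a time: for $\varphi \in E'_+$ the positivity of both summands gives $0 \leq \varphi(|x_\alpha y_\beta - xy|\,u) \leq \varphi(|x_\alpha - x|\,v) + \varphi(|x|\,|y_\beta - y|\,u)$, whose right side tends to $0$ (the condition $\beta \geq \beta_0$ is eventual in the product net), and a general $\varphi \in E'$ is handled by the Banach-lattice decomposition $\varphi = \varphi^+ - \varphi^-$. The only delicate step is the first one, because Lemma 2.5, which would immediately give $y_\beta \leq y$, needs order continuity and is not available here; the modulus-plus-uniqueness trick above, however, uses only Lemma 2.1.
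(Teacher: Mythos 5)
Your proof is correct, and it is in fact tighter than the paper's. The paper expands $|x_\alpha y_\beta-xy|u$ into three terms, $|x_\alpha-x||y_\beta-y|u+|y_\beta-y||x|u+|x_\alpha-x||y|u$, kills the last two by the hypotheses, and disposes of the cross term by asserting that the increasing net satisfies $|x_\alpha-x|\leq 2|x|$, so that $|x_\alpha-x||y_\beta-y|u\leq 2|y_\beta-y|\,(|x|u)\xrightarrow{w}0$. That assertion is the weak point of the paper's argument: it presupposes both that the increasing net lies below its $mw$-limit and that it is bounded below by $-|x|$, neither of which is argued (and the second of which fails for an increasing net that starts far below $x$); only an eventual bound of the form $|x_\alpha-x|\leq x-x_{\alpha_0}$ for $\alpha\geq\alpha_0$ is actually available. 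Your two-term decomposition avoids the cross term altogether, and, more importantly, your modulus-plus-uniqueness argument (the tail $y_\beta-y_{\beta_0}\geq 0$ equals its own modulus, so its two $mw$-limits $y-y_{\beta_0}$ and $|y-y_{\beta_0}|$ must coincide) supplies exactly the missing justification that the monotone net sits below its limit; it effectively re-proves Lemma 2.5(1) using only Lemma 2.1, without the order-continuity hypothesis under which the paper states that lemma. The resulting eventual bound $|y_\beta|\leq|y_{\beta_0}|+|y|$, the passage to the positive vector $v=(|y_{\beta_0}|+|y|)u$, and the functional-by-functional squeeze with $\varphi=\varphi^+-\varphi^-$ are all sound. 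The one caveat, which you inherit from the paper rather than introduce, is that Lemma 2.1(4) (uniqueness of $mw$-limits) is itself delicate in $f$-algebras whose multiplication is degenerate, so both proofs rest on that lemma being taken at face value.
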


\begin{proof}
Assume $x_\alpha\xrightarrow{mw}x$ and $y_\beta\xrightarrow{mw}y$, then we have $|x_\alpha-x|u\xrightarrow{w}0$ and $|y_\beta-x|u\xrightarrow{w}0$ for every $u\in E_+$.Since
$$|x_\alpha y_\beta-xy|u=|x_\alpha y_\beta-x_\alpha y+x_\alpha y-xy|u$$
$$\leq||x_\alpha-x+x||y_\beta-y|u+|x_\alpha-x||y|u$$
$$\leq|x_\alpha-x||y_\beta-y|u+|y_\beta-y||x|u+|x_\alpha-x||y|u$$
The second and the third terms are weak converges to zero, we show first term is weak converges to zero.
Assume $(x_\alpha)$ is increasing, then $|x_\alpha-x|\leq 2|x|$ and $2|y_\beta-y||x|u$ is weak converges to zero, so we have the conclusion.
\end{proof}

\begin{theorem}\label{}
Let $E$ be a order continuous Banach $f$-algebra with a multiplicative unit $e$ and $(x_n)\downarrow$ be a sequence in $E$. Then $x_n\xrightarrow{mw}0$ iff $|x_n|(u\wedge n)\xrightarrow{w}0$ for all $u\in E_+$.
\end{theorem}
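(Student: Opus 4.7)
The plan is to split the equivalence into its two directions, reducing both to a single preliminary observation: that $(u - ne)^+ \downarrow 0$ in order and $\norm{(u-ne)^+} \to 0$ for every $u \in E_+$. For this I would first note that the multiplicative unit $e$ is automatically a weak order unit. Indeed, if $u \in E_+$ satisfies $e \wedge u = 0$, then the $f$-algebra disjointness property applied with multiplier $u$ gives $(eu) \wedge u = 0$; since $eu = u$, this reads $u \wedge u = 0$, whence $u = 0$. A standard Archimedean argument, using for instance the telescoping identity $(u - ne)^+ \wedge e = u \wedge (n+1)e - u \wedge ne$ (whose partial sums are bounded above by $u$), then forces $u \wedge ne \uparrow u$, i.e., $(u-ne)^+ \downarrow 0$ in order; order continuity of the norm upgrades this to $\norm{(u-ne)^+} \to 0$.

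For the forward implication, I would assume $x_n \xrightarrow{mw} 0$ and fix $u \in E_+$. Since $0 \leq u \wedge ne \leq u$, positivity of multiplication in the $f$-algebra yields $0 \leq |x_n|(u \wedge ne) \leq |x_n| u$. For any $\phi \in E'_+$, sandwiching gives $0 \leq \phi(|x_n|(u \wedge ne)) \leq \phi(|x_n| u) \to 0$. Splitting a general $f \in E'$ as $f^+ - f^-$ extends the conclusion to all of $E'$, so $|x_n|(u \wedge ne) \xrightarrow{w} 0$.

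For the converse, suppose $|x_n|(u \wedge ne) \xrightarrow{w} 0$ for every $u \in E_+$. Taking $u = e$ gives $|x_n| \xrightarrow{w} 0$; in particular the sequence $(\norm{x_n})$ is bounded by some constant $M$. For arbitrary $u \in E_+$ I would use the decomposition
\[
|x_n| u = |x_n|(u \wedge ne) + |x_n|(u - ne)^+.
\]
The first summand tends to zero weakly by hypothesis. For the second, the Banach $f$-algebra estimate gives $\bignorm{|x_n|(u-ne)^+} \leq M\, \norm{(u-ne)^+} \to 0$ by the preliminary step. Summing yields $|x_n| u \xrightarrow{w} 0$, i.e., $x_n \xrightarrow{mw} 0$.

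The main obstacle is really the preliminary fact $u \wedge ne \uparrow u$, which combines the multiplicative-unit hypothesis (so that $e$ is a weak order unit), the Archimedean assumption (so that the infimum of $(u-ne)^+ \wedge e$ is forced to vanish), and order continuity of the norm (to turn the resulting order convergence into norm convergence). Once this is in place, both implications reduce to routine decompositions of $|x_n| u$ and a single application of norm-boundedness of weakly null sequences.
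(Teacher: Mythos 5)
Your proof is correct, and your forward direction coincides with the paper's: sandwich $0\leq |x_n|(u\wedge ne)\leq |x_n|u$ and test against positive functionals. For the reverse implication you start from the same decomposition $u=(u\wedge ne)+(u-ne)^{+}$ that underlies the paper's display, but you estimate the two pieces quite differently. The paper controls $(u-ne)^{+}$ via the $f$-algebra inequality $(u-ne)^{+}\leq \frac{1}{n}u^{2}$ (its citation [2, Theorem 2.57]) and then leans on the monotonicity of $(x_n)$, and it handles $|x_n|(u\wedge ne)$ by the further bound $n|x_n|(u\wedge e)$ together with an argument borrowed from the unbounded-norm literature [6, Lemma 2.11]. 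You instead observe that the multiplicative unit is a weak order unit, deduce $(u-ne)^{+}\downarrow 0$ by the Archimedean telescoping argument, and let order continuity of the norm give $\Vert (u-ne)^{+}\Vert\to 0$; combined with norm-boundedness of the weakly null sequence $(|x_n|)$ (obtained from the hypothesis at $u=e$, where $e\wedge ne=e$) and the algebra norm inequality $\Vert xy\Vert\leq\Vert x\Vert\,\Vert y\Vert$, this kills the tail term, while the remaining term is exactly the hypothesis. Your route avoids the square inequality entirely, never uses the assumption that $(x_n)$ is decreasing, and is cleaner than the paper's sketch, which as written is somewhat confused about why $|x_n|(u\wedge ne)$ tends to zero weakly when that is precisely what is being assumed. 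The only structural cost is that your argument hangs everything on order continuity in the step $\Vert(u-ne)^{+}\Vert\to 0$, but the theorem assumes order continuity anyway, so nothing is lost.
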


\begin{proof}
For the forward implication, assume $x_n\xrightarrow{mw}0$, then $|x_n|u\xrightarrow{w}0$ for all $u\in E_+$. Since $|x_n|(u\wedge e)\leq|x_n|u\xrightarrow{w}0$,therefore, $|x_n|(u\wedge n)\xrightarrow{w}0$.

For the reverse implication, by applying [2,theorem 2.57],we have
$$|x_n|u\leq |x_n|(u-u\wedge ne)+|x_n|(u\wedge ne)\leq\frac{1}{n}u^2|x_n|+n|x_n|(u\wedge e)$$
Since $(x_n)\downarrow$ and $E$ is a order continuous Banach $f$-algebra, we have the first term converges weakly to zero, and it is similarily to the proof of [6,lemma 2.11], the second term weak convergent to zero, so $x_n\xrightarrow{mw}0$.
\end{proof}

It is similarily to [12,proposition 2.4], we have the following result.

\begin{theorem}\label{}
Let $0\leq(x_\alpha)_{\alpha\in A}\downarrow$ be a net in a Banach $f$-algebra $E$ with a quasi-interior point $e$. Then $x_\alpha\xrightarrow{mw}0$ iff $x_\alpha e\xrightarrow{w}0$.
\end{theorem}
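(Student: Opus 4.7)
The plan is to mimic the template of the preceding Theorem 2.7: cut $u$ into an $ne$-bounded part $u\wedge ne$ and the tail $u-u\wedge ne$, apply the quasi-interior property of $e$ to the tail, and use the hypothesis $x_\alpha e\xrightarrow{w}0$ on the bounded part. The monotonicity of $(x_\alpha)$ enters only through the resulting norm-boundedness of the net.

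The forward direction is immediate: if $x_\alpha\xrightarrow{mw}0$, taking $u=e$ in the definition of $mw$-convergence gives $x_\alpha e=|x_\alpha|e\xrightarrow{w}0$, since $x_\alpha\geq0$.

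For the converse, fix $u\in E_+$ and aim to show $x_\alpha u\xrightarrow{w}0$. Two preliminary facts will be used. First, since $0\leq x_\alpha\downarrow$, the net is order bounded by $x_{\alpha_0}$ for any chosen initial index $\alpha_0$, and hence norm bounded: set $M:=\|x_{\alpha_0}\|$. Second, since $e$ is a quasi-interior point of $E$, the principal ideal generated by $e$ is norm dense, and the standard consequence is $u\wedge ne\xrightarrow{\|\cdot\|}u$ as $n\to\infty$. I would then test weak convergence against an arbitrary $f\in E'$; decomposing $f=f^+-f^-$ and using positivity of $x_\alpha u$ reduces matters to $f\in E'_+$.

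With these in hand, I would split
\[
f(x_\alpha u)=f\bigl(x_\alpha(u-u\wedge ne)\bigr)+f\bigl(x_\alpha(u\wedge ne)\bigr).
\]
The first summand is controlled by $\|f\|\cdot M\cdot\|u-u\wedge ne\|$ via the submultiplicativity of the Banach $f$-algebra norm, and can therefore be forced below $\varepsilon/2$ by choosing $n$ large, \emph{uniformly in} $\alpha$. With such $n$ fixed, the order inequality $0\leq x_\alpha(u\wedge ne)\leq nx_\alpha e$ (valid because multiplication of positives in an $f$-algebra is order preserving) combined with $f\geq0$ gives $f\bigl(x_\alpha(u\wedge ne)\bigr)\leq n\,f(x_\alpha e)\to0$ by hypothesis. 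Hence $f(x_\alpha u)\to0$, which is the required weak convergence.

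The main obstacle, or rather the only delicate coordination, is the standard ``choose $n$ first, then $\alpha$'' interchange: the norm-boundedness of $(x_\alpha)$ supplied by the monotonicity hypothesis is precisely what makes the tail estimate uniform in $\alpha$, so that $n$ may be fixed before the weak-convergence assumption on $x_\alpha e$ is invoked. Everything else, including the passage from general $f\in E'$ to positive $f$, is routine.
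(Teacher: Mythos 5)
Your proof is correct, and it is essentially the intended argument: the paper itself gives no proof of this statement, deferring to the analogous result in [12, Proposition 2.4], whose proof (like the paper's own preceding theorem) rests on exactly your decomposition $x_\alpha u = x_\alpha(u-u\wedge ne)+x_\alpha(u\wedge ne)$, with the quasi-interior point controlling the tail in norm and the hypothesis $x_\alpha e\xrightarrow{w}0$ controlling the bounded part. Your write-up correctly supplies the two details the citation glosses over, namely the reduction to positive functionals and the uniform-in-$\alpha$ tail estimate coming from the norm-boundedness of the decreasing net.
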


\begin{corollary}\label{}
Let $0\leq(x_\alpha)_{\alpha\in A}\downarrow$ be a net in an order continuous Banach $f$-algebra $E$ with a weak order unit $e$. Then $x_\alpha\xrightarrow{mw}0$ iff $x_\alpha e\xrightarrow{w}0$.
\end{corollary}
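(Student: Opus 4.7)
The plan is to reduce this corollary to the preceding theorem by upgrading the weak order unit $e$ to a quasi-interior point, using the order continuity of the norm on $E$.

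First I would recall the two notions. A positive element $e$ is a \emph{weak order unit} when the band $B_e$ it generates equals $E$, while $e$ is a \emph{quasi-interior point} when the principal ideal $E_e=\bigcup_{n\in\mathbb{N}}[-ne,ne]$ is norm-dense in $E$. The preceding theorem assumes the latter; the present corollary only assumes the former together with order continuity.

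The bridge between the two is the classical fact that, in an order continuous Banach lattice, the norm-closure of any order ideal coincides with the band that ideal generates. Applied to $E_e$, this yields $\overline{E_e}=B_e$. Since $B_e=E$ by the weak order unit hypothesis, $E_e$ is norm-dense in $E$, so $e$ is in fact a quasi-interior point of $E$.

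With that promotion in hand, the preceding theorem applies verbatim to the decreasing positive net $(x_\alpha)_{\alpha\in A}$ and delivers the required equivalence $x_\alpha\xrightarrow{mw}0$ iff $x_\alpha e\xrightarrow{w}0$. I do not foresee a real obstacle: the only substantive step is the weak-order-unit to quasi-interior-point promotion, which is a folklore consequence of order continuity (Aliprantis--Burkinshaw) that I would cite rather than reprove.
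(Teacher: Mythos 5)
Your proposal is correct and is exactly the intended derivation: the paper states this corollary without proof as an immediate consequence of the preceding theorem, relying on the standard fact that in an order continuous Banach lattice a weak order unit is a quasi-interior point (the norm closure of the principal ideal $E_e$ is the band $B_e=E$). Your write-up simply makes that implicit reduction explicit.
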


\begin{corollary}\label{}
Let $0\leq(x_\alpha)_{\alpha\in A}\downarrow$ be a net in a separable Banach $f$-algebra $E$. Then $x_\alpha\xrightarrow{mw}0$ iff $x_\alpha e\xrightarrow{w}0$.
\end{corollary}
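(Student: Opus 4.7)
The plan is to reduce the statement to Theorem 2.9 by producing a quasi-interior point $e \in E_+$ from the separability hypothesis. Once such an $e$ is in hand, the non-negative decreasing net $(x_\alpha)$ satisfies the hypotheses of Theorem 2.9, which gives the equivalence $x_\alpha \xrightarrow{mw} 0 \iff x_\alpha e \xrightarrow{w} 0$ immediately. So the entire content of the corollary lies in showing that a separable Banach $f$-algebra admits a quasi-interior point; I read the symbol $e$ in the statement as denoting this element.

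To produce $e$, I would invoke the classical fact that every separable Banach lattice contains a quasi-interior point (see, e.g., Aliprantis--Burkinshaw, \emph{Positive Operators}). For completeness I would sketch the construction: pick a countable norm-dense sequence $\{y_n\}_{n\ge 1}$ in $E$, replace each $y_n$ by $|y_n|$ to obtain a countable dense-in-$E_+$ subset (density is preserved under $y \mapsto |y|$ by the continuity of the lattice operations), and set
\[
e \;=\; \sum_{n=1}^{\infty} \frac{|y_n|}{2^n\bigl(1+\|y_n\|\bigr)}.
\]
The series converges absolutely in $E$, so $e \in E_+$; and for each $n$ one has $|y_n| \le 2^n(1+\|y_n\|)\, e$, so every $|y_n|$ lies in the principal ideal $E_e$. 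Since $\{|y_n|\}$ is dense in $E_+$ and $E = E_+ - E_+$, the ideal $E_e$ is norm-dense in $E$, i.e.\ $e$ is a quasi-interior point.

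With this $e$ fixed, the hypotheses of Theorem 2.9 are met verbatim ($E$ is a Banach $f$-algebra with quasi-interior point $e$ and $(x_\alpha)$ is a non-negative decreasing net), and the corollary follows at once.

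The main obstacle is conceptual rather than technical: one must recognize that the symbol $e$ in the statement is supplied by the separability of $E$ rather than assumed externally, and one must cite (or briefly reproduce) the standard construction of a quasi-interior point in a separable Banach lattice. Everything else is an immediate invocation of the previous theorem.
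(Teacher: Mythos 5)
Your proposal is correct and is exactly the route the paper intends: the corollary is stated immediately after Theorem 2.9 with no separate proof, relying on the classical fact that a separable Banach lattice admits a quasi-interior point, which your explicit construction $e=\sum_n |y_n|/\bigl(2^n(1+\|y_n\|)\bigr)$ supplies correctly. Your identification of the otherwise undeclared symbol $e$ as this quasi-interior point is the right reading, and the reduction to Theorem 2.9 is then immediate.
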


\begin{corollary}\label{}
Let $0\leq(x_\alpha)_{\alpha\in A}\downarrow$ be a net in a Banach $f$-algebra $E$ with a quasi-interior point $e$. Then $x_\alpha\xrightarrow{mw}0$ iff $x_\alpha (e+u)\xrightarrow{w}0$ for all $u\in E_+$.
\end{corollary}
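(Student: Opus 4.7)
The plan is to deduce this corollary directly from the preceding theorem (the one asserting $x_\alpha\xrightarrow{mw}0 \iff x_\alpha e\xrightarrow{w}0$ for decreasing positive nets in a Banach $f$-algebra with quasi-interior point $e$), so essentially no new work is required beyond bookkeeping.

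For the forward direction, I would argue as follows. Assume $x_\alpha\xrightarrow{mw}0$. By definition, this means $|x_\alpha|\,w\xrightarrow{w}0$ for every $w\in E_+$. Fix $u\in E_+$. Since $e\in E_+$ (as a quasi-interior point, $e$ is positive) and $E_+$ is closed under addition, $e+u\in E_+$. Taking $w=e+u$ and using $x_\alpha\ge 0$ (so $|x_\alpha|=x_\alpha$), we obtain $x_\alpha(e+u)\xrightarrow{w}0$, as required.

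For the reverse direction, suppose $x_\alpha(e+u)\xrightarrow{w}0$ for every $u\in E_+$. Specialize to $u=0\in E_+$; this gives $x_\alpha e\xrightarrow{w}0$. The preceding theorem then yields $x_\alpha\xrightarrow{mw}0$.

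There is no real obstacle here; the only point worth flagging is that the hypothesis quantifies over \emph{all} $u\in E_+$, which is why the trivial choice $u=0$ is legitimate and collapses the statement onto the preceding theorem. The corollary should therefore be stated and proved as a short observation, rather than a result with independent content.
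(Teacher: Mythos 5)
Your proposal is correct and is exactly the intended derivation: the paper states this corollary without proof, leaving it as an immediate consequence of the preceding theorem, and your argument (forward direction directly from the definition of $mw$-convergence since $e+u\in E_+$, reverse direction by specializing to $u=0$ and invoking the theorem) is the obvious way to fill that in. No discrepancy to report.
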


\begin{definition}\label{}
A subset $A$ of $E$ is called a $f$-weak-almost order bounded if for any $\epsilon>0$, there is $u\in E_+,f\in E_+^{'}$ such that $f(|x|-u|x|)\leq\epsilon$. 
\end{definition}
Next, we have the following work, it is similarily to [10,proposition 2.8].

\begin{theorem}\label{}
Let $E$ be a Banach $f$-algebra. If $(x_\alpha)$ is $f$-weak-almost order bounded and $mw$-converges to $x$, then $x_\alpha\xrightarrow{|\sigma|(E,E^{'})}x$.
\end{theorem}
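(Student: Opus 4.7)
The plan is to adapt the standard $\varepsilon/2$-decomposition argument used to prove [10, Proposition 2.8] to the $f$-algebra setting. The target is to verify that $f(|x_\alpha-x|)\to 0$ for every $f\in E_+^{'}$, which is exactly the statement $x_\alpha\xrightarrow{|\sigma|(E,E^{'})}x$.

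Fix $\varepsilon>0$ and $f\in E_+^{'}$. The first step is to use the $f$-weak-almost order boundedness of $(x_\alpha)$, together with the fact that the singleton $\{x\}$ trivially has the same property, to produce $u\in E_+$ satisfying $f(|x_\alpha-x|-u|x_\alpha-x|)\leq\varepsilon/2$ uniformly in $\alpha$. This can be arranged either by enlarging $u$ so that $f(|x|-u|x|)$ is also small, or by combining the two estimates via $|x_\alpha-x|\leq|x_\alpha|+|x|$ and the monotonicity of the $f$-algebra multiplication by positive elements.

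The second step is to invoke the $mw$-hypothesis $|x_\alpha-x|u\xrightarrow{w}0$, which yields $f(|x_\alpha-x|u)\to 0$. Choose $\alpha_0$ such that $f(|x_\alpha-x|u)\leq\varepsilon/2$ for every $\alpha\geq\alpha_0$. Splitting $|x_\alpha-x|=(|x_\alpha-x|-u|x_\alpha-x|)+u|x_\alpha-x|$ and applying the positive functional $f$ combines the two estimates into $f(|x_\alpha-x|)\leq\varepsilon$ for all $\alpha\geq\alpha_0$, which is the desired conclusion.

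The main obstacle is the first step: the definition of $f$-weak-almost order bounded couples the quantifiers on $u$ and $f$ together, so careful bookkeeping is needed to produce a $u$ compatible with the prescribed functional $f$ and with the shift from $|x_\alpha|$ to $|x_\alpha-x|$. Once this reduction is settled the remaining work is the routine $\varepsilon/2$ splitting, and nothing more than positivity of $f$ and the definition of $mw$-convergence is required.
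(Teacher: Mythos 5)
Your proposal is correct and follows essentially the same route as the paper's own proof: pass to the net $(|x_\alpha-x|)$, use $f$-weak-almost order boundedness to control $f(|x_\alpha-x|-u|x_\alpha-x|)$, and use the $mw$-hypothesis to kill $f(|x_\alpha-x|u)$. In fact your write-up is more careful than the paper's, which simply asserts that $(|x_\alpha-x|)$ inherits the $f$-weak-almost order boundedness and never addresses the quantifier issue on $u$ and $f$ that you rightly flag as the only delicate point.
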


\begin{proof}
If $(x_\alpha)$ is $f$-weak-almost order bounded net. Then the net $(|x_\alpha-x|)$ is also. For any $\epsilon>0$, there exists $u>0,f\in E^{'}_+$ such that 
$$f(|x_\alpha-x|-u|x_\alpha-x|)\leq\epsilon.$$
Since $x_\alpha\xrightarrow{mw}x$, we have $|x_\alpha-x|u\xrightarrow{w}0$. Therefore, we have $x_\alpha\xrightarrow{|\sigma|(E,E^{'})}x$.
\end{proof}

\begin{definition}\label{}
Let $(x_\alpha)$ be a net in a Banach lattice $f$-algebra $E$. Then 

$(1)$  $(x_\alpha)$ is said to be $mw$-Cauchy if the net
$(x_\alpha-x_{\alpha^{'}})_{(\alpha,\alpha^{'})\in A\times A}$ $mw$-converges to 0.

$(2)$ $E$ is called $(\sigma)$-$mw$-complete if every $mw$-Cauchy net(sequence) in $E$ is $mw$-convergent.

$(3)$ $E$ is called $mw$-continuous if $x_\alpha\xrightarrow{o}0$ implies that $x_\alpha\xrightarrow{mw}0$.	
\end{definition}

\begin{lemma}\label{}
A Banach $f$-algebra is $mw$-continuous iff $x_\alpha\downarrow0$ implies $x_\alpha\xrightarrow{mw}0$.
\end{lemma}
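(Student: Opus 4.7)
The plan is to verify the two implications of the equivalence separately. The forward direction is essentially tautological: any net $(x_\alpha)$ with $x_\alpha \downarrow 0$ satisfies $x_\alpha \xrightarrow{o} 0$ by the definition of order convergence stated in the introduction, so if $E$ is $mw$-continuous then $x_\alpha \xrightarrow{mw} 0$ follows immediately.

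For the reverse direction I would assume that every net with $x_\alpha \downarrow 0$ is $mw$-null, and aim to show that an arbitrary $o$-null net is $mw$-null. Given $x_\alpha \xrightarrow{o} 0$, the definition of order convergence produces an auxiliary net $(y_\beta)_{\beta\in B}$ with $y_\beta \downarrow 0$ and, for each $\beta$, an index $\alpha_\beta$ such that $|x_\alpha| \leq y_\beta$ for all $\alpha \geq \alpha_\beta$. Applying the hypothesis to $(y_\beta)$ gives $y_\beta \xrightarrow{mw} 0$, which unfolds to $y_\beta u \xrightarrow{w} 0$ for every $u \in E_+$.

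Now I would fix $u \in E_+$ and a positive functional $f \in E_+'$. The $f$-algebra estimate $0 \leq |x_\alpha| u \leq y_\beta u$ for $\alpha \geq \alpha_\beta$ together with positivity of $f$ gives $0 \leq f(|x_\alpha| u) \leq f(y_\beta u)$, and since $f(y_\beta u) \to 0$ an $\varepsilon$-argument (pick $\beta$ first so that $f(y_\beta u) < \varepsilon$, then restrict to $\alpha \geq \alpha_\beta$) yields $f(|x_\alpha| u) \to 0$. A general $f \in E'$ decomposes as $f = f^+ - f^-$ with $f^\pm \in E_+'$, so $f(|x_\alpha| u) \to 0$ for every $f \in E'$, i.e.\ $|x_\alpha| u \xrightarrow{w} 0$, which is exactly $x_\alpha \xrightarrow{mw} 0$. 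The only point that requires a little care is the interlocking of the two index sets $A$ and $B$ when transferring weak convergence from the majorizing net $(y_\beta u)$ to the original net $(|x_\alpha| u)$; apart from that, the argument is routine and no completeness or order-continuity assumption on the norm is needed beyond what is already implicit in the definition of $mw$-convergence.
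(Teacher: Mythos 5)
Your argument is correct and follows essentially the same route as the paper's own proof: dominate an $o$-null net $(x_\alpha)$ eventually by a net $y_\beta\downarrow 0$, apply the hypothesis to get $y_\beta\xrightarrow{mw}0$, and transfer the convergence via $0\leq f(|x_\alpha|u)\leq f(y_\beta u)$. Your version merely spells out the $\varepsilon$-bookkeeping between the two index sets and the reduction to positive functionals, which the paper leaves implicit.
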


\begin{proof}
Let $(x_\alpha)\xrightarrow{o}0$, we have there exists $y_\beta\downarrow0$ and $|x_\alpha|\leq y_\beta$. Since $y_\beta\downarrow0$, so $y_\beta\xrightarrow{mw}0$, we have $x_\alpha\xrightarrow{mw}0$.	
\end{proof}

\begin{theorem}\label{}
Let $E$ be an $mw$-complete Banach $f$-algebra. Then the following statements are quivalent:

$(1)$ $E$ is $mw$-continuous;

$(2)$ if $0\leq x_\alpha\uparrow\leq x$ holds in $E$ then $(x_\alpha)$ is an $mw$-Cauchy net;

$(3)$ $x_\alpha\downarrow0$ implies $x_\alpha\xrightarrow{mw}0$ in $E$.
\end{theorem}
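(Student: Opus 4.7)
The plan is to establish the cycle $(1) \Rightarrow (2) \Rightarrow (3) \Rightarrow (1)$. The closing arrow $(3) \Rightarrow (1)$ is nothing more than the preceding lemma, which characterises $mw$-continuity precisely by the implication ``$x_\alpha \downarrow 0 \Rightarrow x_\alpha \xrightarrow{mw} 0$''. The substantive content lies in the two forward steps.

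For $(1) \Rightarrow (2)$ I would in fact establish (2) in every Banach $f$-algebra, without appealing to $mw$-continuity. Fix $0 \leq x_\alpha \uparrow \leq x$ and $u \in E_+$. For each $f \in E'_+$ the scalar net $f(x_\alpha u)$ is increasing and bounded above by $f(xu)$, so it converges to some $L$ and is Cauchy. Given $\alpha, \alpha' \geq \gamma$, pick $\beta \in A$ with $\beta \geq \alpha, \alpha'$; monotonicity of $(x_\alpha)$ yields $|x_\alpha - x_{\alpha'}| \leq x_\beta - x_\gamma$, so
\[
f(|x_\alpha - x_{\alpha'}| u) \leq f(x_\beta u) - f(x_\gamma u) \leq L - f(x_\gamma u),
\]
which is uniformly small in $(\alpha,\alpha')$ once $\gamma$ is chosen large. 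Decomposing an arbitrary $f \in E'$ as $f = f^+ - f^-$ in the Banach lattice $E'$ and using $|f(z)| \leq (f^+ + f^-)(z)$ for $z \geq 0$ extends the bound, giving $|x_\alpha - x_{\alpha'}| u \xrightarrow{w} 0$ along $A \times A$; that is, $(x_\alpha)$ is $mw$-Cauchy.

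For $(2) \Rightarrow (3)$, let $x_\alpha \downarrow 0$, fix $\alpha_0$, and set $y_\alpha := x_{\alpha_0} - x_\alpha$ for $\alpha \geq \alpha_0$, so that $0 \leq y_\alpha \uparrow \leq x_{\alpha_0}$. By (2), $(y_\alpha)$ is $mw$-Cauchy, and $mw$-completeness of $E$ furnishes some $y$ with $y_\alpha \xrightarrow{mw} y$; Lemma 2.1(3) then gives $x_\alpha \xrightarrow{mw} z$ with $z := x_{\alpha_0} - y$. To show $z = 0$ I would invoke preservation of positivity under $mw$-limits twice. First, since $x_\alpha \geq 0$ we have $x_\alpha = |x_\alpha| \xrightarrow{mw} |z|$ by Lemma 2.1(5), and uniqueness of $mw$-limits (Lemma 2.1(4)) forces $z = |z| \geq 0$. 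Second, for each $\beta \geq \alpha_0$ the tail net $(x_\beta - x_\alpha)_{\alpha \geq \beta}$ is positive and $mw$-converges to $x_\beta - z$, so the same argument gives $z \leq x_\beta$; letting $x_\beta \downarrow 0$ yields $z \leq 0$ and hence $z = 0$.

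The main obstacle is the uniform estimate inside the proof of (2): the natural instinct is to use the supremum of $(x_\alpha)$ and apply $mw$-continuity to $\sup x_\alpha - x_\alpha \downarrow 0$, but $E$ need not be Dedekind complete. The argument must instead be routed purely through the scalar monotone-Cauchy property of $f(x_\alpha u)$, letting the dominating index $\gamma$ tend to infinity together with $\alpha,\alpha'$. A minor point worth flagging is that positivity preservation of $mw$-limits is stated in Lemma 2.5(1) under an order continuity hypothesis, yet its proof uses only Lemma 2.1 and so is available here with no extra assumption on the norm.
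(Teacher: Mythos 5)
Your proof is correct, but it takes a genuinely different route from the paper's on the key step $(1)\Rightarrow(2)$. The paper follows the Aliprantis--Burkinshaw device: letting $(y_\beta)$ run over the upper bounds of the increasing net, the proof of [2, Lemma 1.37] gives $(y_\beta-x_\alpha)_{\alpha,\beta}\downarrow 0$, whence $mw$-continuity (via the preceding lemma) yields $(y_\beta-x_\alpha)\xrightarrow{mw}0$ and the triangle inequality $|x_\alpha-x_{\alpha'}|\leq(y_\beta-x_\alpha)+(y_\beta-x_{\alpha'})$ finishes. Your scalar argument --- that $f(x_\alpha u)$ is a bounded increasing net of reals for each $f\in E'_+$, hence Cauchy, together with $|x_\alpha-x_{\alpha'}|\leq x_\beta-x_\gamma$ --- never invokes $(1)$ at all, so it proves that $(2)$ holds in \emph{every} Banach $f$-algebra. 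That is a stronger statement; combined with your $(2)\Rightarrow(3)\Rightarrow(1)$ it shows all three conditions hold automatically in any $mw$-complete Banach $f$-algebra, so the theorem's ``equivalence'' is in fact vacuous under its own hypotheses --- a consequence worth flagging explicitly rather than leaving implicit. On $(2)\Rightarrow(3)$ you follow the paper's skeleton (fix $\alpha_0$, pass to $x_{\alpha_0}-x_\alpha$, use $mw$-completeness), but where the paper concludes $x=0$ by citing Lemma 2.5 --- whose stated hypothesis of order continuous norm is absent from the theorem --- you supply a self-contained sandwich $0\leq z\leq x_\beta$ for all $\beta$ with $\inf_\beta x_\beta=0$, and correctly observe that the positivity-preservation step needs only Lemma 2.1. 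This repairs a small gap in the paper's own write-up. The one shared weak point is the reliance on Lemma 2.1(4) (uniqueness of $mw$-limits), which in a degenerate $f$-algebra (e.g.\ one with trivial multiplication) only gives $|x-y|u=0$ for all $u\in E_+$ rather than $x=y$; but since the paper states and uses that lemma as given, you are on equal footing there.
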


\begin{proof}
$(1)\Rightarrow(2)$: By the proof of [2,lemma 1.37], we have $(y_\beta-x_\alpha)_{\alpha,\beta}\downarrow0$; according to lemma 2.16, we have $(y_\beta-x_\alpha)_{\alpha,\beta}\xrightarrow{mw}0$, so $(x_\alpha)$ is an $mw$-Cauchy net.

$(2)\Rightarrow(3)$: Fix arbitrary $\alpha_0$, we have $0\leq(x_{\alpha_0}-x_\alpha)_{(\alpha\geq\alpha_0)}\uparrow\leq x_{\alpha_0}$. So $(x_{\alpha_0}-x_\alpha)_{(\alpha\geq\alpha_0)}$ is $mw$-Cauchy net ,so $(x_{\alpha^{'}}-x_\alpha)\xrightarrow{mw}0$. Since $E$ is $mw$-complete, so $x_\alpha\xrightarrow{mw}x$. Because of lemma 2.5, we have $x=0$, therefore, $x_\alpha\xrightarrow{mw}0$.

$(3)\Rightarrow(1)$: By lemma 2.16.
\end{proof}

\begin{corollary}\label{}
$(\sigma)$-$mw$-complete also has those properties of the last theorem.	
\end{corollary}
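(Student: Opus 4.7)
The plan is to replicate the proof scheme of Theorem 2.17 verbatim, but everywhere replacing nets by sequences and $mw$-completeness by $\sigma$-$mw$-completeness. Concretely, I would restate the three assertions in their sequential form: (1$'$) $E$ is $\sigma$-$mw$-continuous, i.e.\ $x_n\xrightarrow{o}0$ implies $x_n\xrightarrow{mw}0$; (2$'$) whenever $0\leq x_n\uparrow\leq x$ in $E$, the sequence $(x_n)$ is $mw$-Cauchy; (3$'$) $x_n\downarrow 0$ implies $x_n\xrightarrow{mw}0$. The target is then the cyclic chain $(1')\Rightarrow(2')\Rightarrow(3')\Rightarrow(1')$, using the sequential analog of Lemma 2.16 at the two ends and $\sigma$-$mw$-completeness in the middle implication.

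For $(1')\Rightarrow(2')$, I would invoke the same argument from the proof of [2, Lemma 1.37] that Theorem 2.17 uses; applied to the bounded increasing sequence $0\leq x_n\uparrow\leq x$, that construction already produces a double-indexed decreasing sequence $(x_m-x_n)_{m\geq n}\downarrow 0$ in the order sense, so no genuine net machinery is needed. Since by (1$'$) and the sequential version of Lemma 2.16 order convergence to $0$ of sequences yields $mw$-convergence, we obtain $(x_m-x_n)\xrightarrow{mw}0$, which is exactly the $mw$-Cauchy property.

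For $(2')\Rightarrow(3')$, I would fix $n_0$ and look at $0\leq (x_{n_0}-x_n)_{n\geq n_0}\uparrow\leq x_{n_0}$; by (2$'$) this is $mw$-Cauchy, hence so is $(x_n)$. The hypothesis of $\sigma$-$mw$-completeness delivers a point $x\in E$ with $x_n\xrightarrow{mw}x$, and then Lemma 2.5 (applied exactly as in Theorem 2.17) forces $x=0$. Finally $(3')\Rightarrow(1')$ is the sequential reading of Lemma 2.16, which runs unchanged: from an order-null sequence $x_n\xrightarrow{o}0$ one extracts $y_m\downarrow 0$ dominating $|x_n|$, so $y_m\xrightarrow{mw}0$ by (3$'$), and domination then gives $x_n\xrightarrow{mw}0$.

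The only delicate spot is the middle step: one must be sure that the sequential completeness hypothesis is actually enough to conclude $mw$-convergence of $(x_n)$ from its being $mw$-Cauchy, and that the uniqueness of $mw$-limits used to force $x=0$ does not secretly require a net argument. Both checks are immediate from the definition of $\sigma$-$mw$-completeness in Definition 2.15(2) and from Lemma 2.1(4), so no new analytic ingredient is needed, and the corollary follows by essentially copying the proof of Theorem 2.17 line by line.
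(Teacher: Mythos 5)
Your overall strategy is the right one: the paper gives no separate argument for this corollary, intending exactly the ``replace nets by sequences and use Definition 2.15(2) in its sequential form'' reading that you carry out, and your steps $(2')\Rightarrow(3')$ and $(3')\Rightarrow(1')$ match what Theorem 2.17 does. However, there is one concrete error in your $(1')\Rightarrow(2')$ step. You claim that for $0\leq x_n\uparrow\leq x$ the construction of [2, Lemma 1.37] ``already produces a double-indexed decreasing sequence $(x_m-x_n)_{m\geq n}\downarrow 0$, so no genuine net machinery is needed.'' That object is not decreasing and need not order-converge to $0$: take $E=c$ (or $\ell^\infty$) with pointwise multiplication and $x_n=\sum_{k\leq n}e_k$, so $0\leq x_n\uparrow\leq \one$, yet $x_{n+1}-x_n=e_{n+1}$ is a disjoint sequence that is not order null, and a bounded increasing sequence is in general not order Cauchy. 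What [2, Lemma 1.37] actually provides, and what the proof of Theorem 2.17 uses, is the net $(y_\beta-x_\alpha)_{\alpha,\beta}\downarrow 0$ where $(y_\beta)$ is the downward-directed family of \emph{all upper bounds} of $\{x_\alpha\}$ --- an index set that is genuinely uncountable even when $(x_\alpha)$ is a sequence.

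The consequence is that your purely sequential hypothesis $(1')$ (order-null \emph{sequences} are $mw$-null) does not obviously apply to the net $(y_\beta-x_n)\downarrow 0$, so the implication $(1')\Rightarrow(2')$ is not established as written. You must either keep (1) in its net form, as Definition 2.15(3) and Lemma 2.16 state it (note that the $\sigma$ in the corollary modifies only completeness, which enters only in $(2)\Rightarrow(3)$ --- the step $(1)\Rightarrow(2)$ uses no completeness at all and can be quoted verbatim from the theorem), or else supply an additional argument extracting a countable cofinal subfamily from $(y_\beta)$, which is not available in general. The remaining steps are fine, modulo the fact that you inherit the paper's own looseness in invoking Lemma 2.5 (stated there under an order-continuity hypothesis) to get $x=0$; that defect is already present in Theorem 2.17 and is not specific to your write-up.
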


\begin{corollary}\label{}
Every $mw$-continuous and $mw$-complete Banach $f$-algebra is Dedekind complete.	
\end{corollary}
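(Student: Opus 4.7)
The plan is to verify the standard characterisation of Dedekind completeness: every increasing net bounded above in $E_+$ has a supremum. So I would start with an arbitrary net $0\leq x_\alpha\uparrow\leq x$ in $E$ and try to produce its supremum as an $mw$-limit.

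Since $E$ is assumed to be both $mw$-continuous and $mw$-complete, Theorem 2.18 applies and yields that $(x_\alpha)$ is an $mw$-Cauchy net. By $mw$-completeness there exists $y\in E$ with $x_\alpha\xrightarrow{mw}y$. The rest of the argument is to identify $y$ as the least upper bound of the $x_\alpha$'s.

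For the upper-bound part, I would fix an arbitrary index $\alpha_0$; then for $\alpha\geq\alpha_0$ the tail $x_\alpha-x_{\alpha_0}\geq 0$, and by Lemma 2.1(3) one has $x_\alpha-x_{\alpha_0}\xrightarrow{mw}y-x_{\alpha_0}$. Applying Lemma 2.5(1) to this tail gives $y-x_{\alpha_0}\in E_+$, i.e.\ $y\geq x_{\alpha_0}$, and since $\alpha_0$ was arbitrary, $y$ is an upper bound for $(x_\alpha)$. For minimality, let $z\in E$ be any other upper bound of $(x_\alpha)$; then $z-x_\alpha\geq 0$ for every $\alpha$ and $z-x_\alpha\xrightarrow{mw}z-y$, so Lemma 2.5(1) again yields $z-y\geq 0$. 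Hence $y=\sup_\alpha x_\alpha$ exists in $E$, which proves Dedekind completeness.

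I do not foresee a serious obstacle here; the only delicate step is confirming that Lemma 2.5(1) is legitimately invoked in an $mw$-continuous Banach $f$-algebra (its proof in the excerpt uses only Lemma 2.1(5) and uniqueness of $mw$-limits, so the underlying order-continuity hypothesis is not actually needed to conclude positivity of the limit). Once that is granted, the argument reduces to chaining Theorem 2.18, $mw$-completeness, and the positivity of $mw$-limits.
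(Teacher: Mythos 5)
Your proof is correct and follows essentially the same route as the paper: invoke the equivalence theorem to get that the increasing bounded net is $mw$-Cauchy, use $mw$-completeness to obtain an $mw$-limit, and then identify that limit as the supremum via positivity of $mw$-limits. The only difference is that you spell out the least-upper-bound verification (and the remark that Lemma 2.5(1) does not actually need order continuity of the norm) which the paper compresses into the phrase ``by the proof of Lemma 2.5, $x_\alpha\uparrow x$''.
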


\begin{proof}
Suppose $E$ is $mw$-continuous and $mw$-complete and $0\leq x_\alpha\uparrow\leq y$ in $E$. By theorem 2.17, $(x_\alpha)$ is $mw$-Cauchy, so $x_\alpha\xrightarrow{mw}x$ and by the proof of lemma 2.5, we have $x_\alpha\uparrow x$, so $E$ is Dedekind complete.
\end{proof}

It was observed in [7,8,10], we now turn our attention to a topology on Banach $f$-algebras. The sets of the form
$$V_{u,\epsilon,f}= \{x\in E:f(|x|u)<\epsilon \},$$
where $u\in E_+,\epsilon>0,f\in E_+^{'}$ form a base of zero neighborhoods for a Hausdorff topology, and the convergence in this topology is exactly the $mw$-convergen.

Similarily to [7,8,10], we these conclusions.

\begin{lemma}\label{}
$V_{u,\epsilon,f}$ is either contained in $[-u,u]$ or contains a non-trivial ideal.	
\end{lemma}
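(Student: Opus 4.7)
The plan is to pin down which ideals can possibly live inside $V_{u,\epsilon,f}$, and then split into cases on whether such an ideal is trivial or not.

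The key preliminary observation is that if $I\subseteq V_{u,\epsilon,f}$ is an ideal (or even just a vector subspace), then for every $z\in I$ and every $n\in\mathbb{N}$ we have $nz\in I$, and hence
\[
n\,f(|z|u)=f(|nz|u)<\epsilon.
\]
Letting $n\to\infty$ forces $f(|z|u)=0$. Therefore every ideal contained in $V_{u,\epsilon,f}$ automatically lies in the canonical ideal
\[
J=\{z\in E:f(|z|u)=0\},
\]
and the dichotomy to prove reads: \emph{either} $V_{u,\epsilon,f}\subseteq[-u,u]$, \emph{or} $J\neq\{0\}$.

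Next I would exhibit a natural sub-ideal of $J$ that is available for free, namely the disjoint complement $\{u\}^{d}=\{z\in E:|z|\wedge u=0\}$. Since $E$ is an Archimedean $f$-algebra, disjointness transports across multiplication: $|z|\wedge u=0$ forces $|z|\cdot u=0$, so $\{u\}^{d}\subseteq J$. Consequently, if $u$ is \emph{not} a weak order unit, the band $\{u\}^{d}$ is already a non-trivial ideal sitting inside $V_{u,\epsilon,f}$ and the lemma holds in this case.

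The remaining case is when $u$ is a weak order unit (so $\{u\}^{d}=\{0\}$). Here I would argue by contradiction: assume $J=\{0\}$ and pick some $x\in V_{u,\epsilon,f}$ with $|x|\not\leq u$; set $y=(|x|-u)^{+}>0$. From the $f$-algebra splitting $|x|u=(|x|\wedge u)u+yu$ one gets $f(yu)\leq f(|x|u)<\epsilon$, while $J=\{0\}$ demands $f(yu)>0$. The plan is then to combine this with the $f$-algebra calculus, in particular $|ab|=|a||b|$ and the preservation of disjointness by multiplication by $u$, together with the Archimedean hypothesis, to manufacture a genuine non-zero element of $J$ (for instance by projecting onto the principal band generated by $y$ and absorbing a sufficiently large scalar multiple), thereby contradicting $J=\{0\}$ and forcing $V_{u,\epsilon,f}\subseteq[-u,u]$.

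The main obstacle is exactly this last manoeuvre. In the purely Banach-lattice analogues of [7,8,10] the disjoint complement $\{u\}^{d}$ alone suffices, but the multiplicative nature of $V_{u,\epsilon,f}$ means that ruling out the excess $(|x|-u)^{+}$ really does need the $f$-algebra structure: without the identities $|zu|=|z|u$ and $|z|\wedge u=0\Rightarrow|z|u=0$ the squeeze argument collapses, which is why this dichotomy is an honest Banach $f$-algebra statement rather than a lattice-theoretic triviality.
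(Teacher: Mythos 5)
The paper offers no actual proof of this lemma (it is only asserted to follow ``similarly to [7,8,10]''), so the real comparison is against the unbounded-norm/uaw analogue, e.g.\ the corresponding lemma in [7]. Your first two steps are correct and are in fact the right way to analyse the situation: since $z\mapsto f(|z|u)$ is positively homogeneous, any ideal $I\subseteq V_{u,\epsilon,f}$ must lie in the null ideal $J=\{z\in E: f(|z|u)=0\}$, and $\{u\}^{d}\subseteq J$ because disjoint elements of an Archimedean $f$-algebra have zero product; hence the lemma holds whenever $u$ fails to be a weak order unit.

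The gap is your last case, and it cannot be closed: when $u$ is a weak order unit and $J=\{0\}$, the claimed conclusion $V_{u,\epsilon,f}\subseteq[-u,u]$ is simply false. Take $E=\mathbb{R}$ with its usual multiplication (a Banach $f$-algebra), $u=1$, $f=\mathrm{id}$, $\epsilon=10$: then $V_{u,\epsilon,f}=(-10,10)\not\subseteq[-1,1]$, while the only ideals of $\mathbb{R}$ are $\{0\}$ and $\mathbb{R}$, neither of which is a non-trivial ideal contained in $V_{u,\epsilon,f}$. The reason the argument of [7] does not transplant is exactly the point you half-identify at the end: in the un/uaw setting the map $z\mapsto |z|\wedge u$ is truncated, so the lattice inequality $n(|x|-u)^{+}\wedge u\leq |x|\wedge u$ places the entire principal ideal generated by $(|x|-u)^{+}$ inside the neighborhood; the multiplicative map $z\mapsto |z|u$ is linear, so no such absorption occurs and the true dichotomy degenerates to ``$V_{u,\epsilon,f}$ contains a non-trivial ideal iff $J\neq\{0\}$,'' which has nothing to do with $[-u,u]$. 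So your proposed final manoeuvre of manufacturing a nonzero element of $J$ from $(|x|-u)^{+}$ has no chance of succeeding in general; the statement itself needs to be corrected before any proof can be completed.
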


\begin{lemma}\label{}
If $V_{u,\epsilon,f}$ is contained in $[-u,u]$, then $u$ is a strong unit.	
\end{lemma}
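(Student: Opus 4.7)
The plan is to exploit the scaling behaviour of the seminorm-like quantity $f(|x|u)$ under the homothety $x\mapsto x/n$. Recall that to say $u$ is a strong (order) unit means every $x\in E$ satisfies $|x|\le \lambda u$ for some scalar $\lambda>0$, so the goal is, given an arbitrary $x\in E$, to produce such a $\lambda$.

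First, fix $x\in E$ and compute $f(|x|u)$; this is a single nonnegative real number. Since $f$ is linear and $|{\tfrac{x}{n}}|u=\tfrac1n|x|u$, we have $f\bigl(|\tfrac{x}{n}|u\bigr)=\tfrac1n f(|x|u)$. Choose a positive integer $n$ large enough that $\tfrac1n f(|x|u)<\epsilon$. Then $\tfrac{x}{n}\in V_{u,\epsilon,f}$.

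Now invoke the hypothesis $V_{u,\epsilon,f}\subseteq[-u,u]$: this forces $-u\le \tfrac{x}{n}\le u$, equivalently $|x|\le nu$. Since $x\in E$ was arbitrary, every element of $E$ is dominated by some positive multiple of $u$, which is exactly the definition of $u$ being a strong unit.

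There is essentially no obstacle; the only point one might worry about is that the scaling works because $V_{u,\epsilon,f}$ is defined by a strict inequality on a positively homogeneous expression in $x$, and because $\epsilon>0$ is fixed while $f(|x|u)<\infty$ for any individual $x$. No use is made of the $f$-algebra structure beyond what is already built into the definition of $V_{u,\epsilon,f}$.
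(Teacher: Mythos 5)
Your proof is correct: scaling $x$ by $1/n$ and using positive homogeneity of $f(|x|u)$ to land inside $V_{u,\epsilon,f}\subseteq[-u,u]$, hence $|x|\le nu$, is exactly the standard argument. The paper gives no explicit proof and merely defers to the analogous lemmas in its references [7,8,10], where the same scaling device is used, so your approach coincides with the intended one.
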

Similarily to [10]
\begin{theorem}\label{}
Let $E$ be a Banach lattice.$E^{'}$ has a strong order unit when one of the following conditions is valid:

(1) $mw$-topology agrees with norm topology.

(2) $mw$-topology agrees with weak topology.	
\end{theorem}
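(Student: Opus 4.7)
The plan is to derive a strong order unit by applying Lemmas 2.20 and 2.21 in each case. The common idea is to locate a basic $mw$-zero-neighborhood $V_{u,\epsilon,f}$ that, under the assumption at hand, cannot contain a non-trivial ideal; the dichotomy in Lemma 2.20 then forces $V_{u,\epsilon,f}\subseteq[-u,u]$, and Lemma 2.21 yields the strong unit, from which the conclusion on $E'$ is obtained via the standard duality identifying a Banach lattice with a strong order unit with a $C(K)$-space and $E'$ with the corresponding space of measures.

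For case (1), I would begin from the hypothesis that the $mw$-topology agrees with the norm topology, so that the open unit ball $B_E$ is an $mw$-neighborhood of zero and therefore contains some basic $V_{u,\epsilon,f}$. Any non-trivial ideal $I$ of $E$ contains, together with each nonzero $x$, the entire unbounded sequence $(nx)_{n\in\mathbb{N}}$, so $I\not\subseteq B_E$ and in particular $I\not\subseteq V_{u,\epsilon,f}$. Lemma 2.20 then leaves only the alternative $V_{u,\epsilon,f}\subseteq[-u,u]$, whence Lemma 2.21 gives that $u$ is a strong order unit in $E$, and the dual conclusion follows.

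For case (2), the scheme is the same but $V_{u,\epsilon,f}$ must now be compared with a weak zero-neighborhood. The delicate point is that basic weak zero-neighborhoods in an infinite-dimensional Banach lattice contain finite-codimensional subspaces and are never norm-bounded, so the clean inclusion $V_{u,\epsilon,f}\subseteq B_E$ from case (1) is unavailable. I plan to circumvent this by transferring the problem to the dual: if the $mw$- and weak topologies coincide on $E$, then polars of $mw$-neighborhoods agree with polars of weak neighborhoods, which are equicontinuous subsets of $E'$; I then intend to apply the analogue of Lemma 2.20 inside $E'$ endowed with the weak$^\ast$ topology, and extract a strong order unit for $E'$ directly.

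The main obstacle I expect is case (2): case (1) is essentially mechanical once Lemmas 2.20 and 2.21 are on hand, but reconciling the intrinsic largeness of weak zero-neighborhoods with the boundedness tacitly required by Lemma 2.20 is where the real work lies. Making the dual transfer precise — in particular, checking that the $mw$-topology interacts well with the polar operation, and that the ideal-versus-interval dichotomy survives the passage to $(E',w^\ast)$ — is the nontrivial technical step, and is more subtle than the application of Lemmas 2.20--2.21 themselves.
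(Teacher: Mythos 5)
The paper does not actually write out a proof of this theorem: it is preceded only by the phrase \emph{Similarily to [10]}, so the only material to compare against is the evident intention to run Lemmas 2.20 and 2.21. Your case (1) carries out that intended argument correctly as far as it goes: the unit ball is an $mw$-neighbourhood of zero, hence contains some $V_{u,\epsilon,f}$; a non-trivial ideal is norm-unbounded, so the dichotomy of Lemma 2.20 forces $V_{u,\epsilon,f}\subseteq[-u,u]$, and Lemma 2.21 makes $u$ a strong unit \emph{of $E$}. The genuine gap is your last step. A strong order unit in $E$ does not produce one in $E'$: if $E$ has a strong unit then $E\cong C(K)$ and $E'\cong M(K)$, which has no strong order unit once $K$ is infinite (a candidate $\mu$ must dominate every point mass, so $\mu(\{k\})>0$ for all $k$, and since these masses are summable one builds a purely atomic $\nu$ with $\nu(\{k_n\})/\mu(\{k_n\})\to\infty$). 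So either the $E'$ in the statement is a slip for $E$ (plausible, since Lemmas 2.20--2.21 live in $E$, and in [10] the analogous topology sits on the dual), or the conclusion about $E'$ must be reached by a different mechanism. In fact a short direct route is available and bypasses both lemmas: $V_{u,\epsilon,f}\subseteq B_E$ gives, by homogeneity, $\Vert x\Vert\leq\epsilon^{-1}f(|x|u)$ for all $x$, hence $|g|(y)\leq\Vert g\Vert\epsilon^{-1}f(yu)$ for every $g\in E'$ and $y\in E_+$, i.e.\ the positive functional $y\mapsto f(yu)$ is a strong order unit of $E'$. Your $C(K)$-duality bridge should be replaced by this.

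Case (2) is simply not proved. Your polar/dual-transfer plan is a statement of intent rather than an argument: no ideal-versus-interval dichotomy is established in $(E',w^*)$, and you give no mechanism for extracting a single dominating functional. Moreover, the route through Lemmas 2.20--2.21 cannot work there at all when $E$ is infinite-dimensional: if the $mw$-topology equals the weak topology, each $V_{u,\epsilon,f}$ contains a basic weak neighbourhood, hence a non-trivial subspace, and a subspace contained in $[-u,u]$ is trivial by the Archimedean property; thus Lemma 2.20 always lands in the branch where $V_{u,\epsilon,f}$ contains a non-trivial ideal and Lemma 2.21 never applies. The homogeneity argument above only yields, for each fixed $g\in E'$, some $u,f,\epsilon$ depending on $g$ with $|g|\leq\epsilon^{-1}f(\cdot\,u)$, which is weaker than producing one strong unit. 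So case (2) requires a genuinely different idea, and neither your proposal nor the paper supplies one.
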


\begin{problem}\label{}
How to describe the compactness of $mw$-topology.

When the $mw$-topology is metrizability.
\end{problem}	
\noindent \textbf{Acknowledgement.} xxxxxx

\end{document}